\spnewtheorem*{cor}{Corollary}{\bf}{\it}
\spnewtheorem*{lem}{Lemma}{\bf}{\it}
\spnewtheorem*{pro}{Proposition}{\bf}{\it}
\spnewtheorem*{teo}{Theorem}{\bf}{\it}
\newcommand\al{\alpha}
\newcommand\cc{d}
\newcommand\dd{\partial}
\newcommand\De{\Delta}
\newcommand\de{\delta}
\newcommand\FF{\mathbb F}
\renewcommand\ge{\geqslant}
\newcommand\ib{\hspace{1pt}\bar\imath\hspace{1pt}}
\newcommand\La{\Lambda}
\newcommand\la{\lambda}
\newcommand\lap{\la^{\ts\prime}}
\newcommand\lcd{,\ldots,}
\renewcommand\le{\leqslant}
\newcommand\ns{\hspace{-.75pt}}
\newcommand\om{\omega}
\newcommand\QQ{\mathbb Q}
\renewcommand\phi{\varphi}
\newcommand\rb{\hspace{1pt}\bar r\hspace{1pt}}
\renewcommand\sb{\hspace{1pt}\bar s\hspace{1pt}} 
\newcommand\Sg{\mathfrak{S}}
\newcommand\si{\sigma}
\newcommand\sib{\hspace{1pt}\bar\sigma\hspace{1pt}}
\newcommand\ts{\hspace{0.75pt}}
\newcommand\xh{\widehat{x}}
\newcommand\Ref[1]{(\ref{#1})}
\newcommand\beq{\begin{equation}}
\newcommand\eeq{\end{equation}}
\newcommand\be{\begin{equation*}}
\newcommand\ee{\end{equation*}}
\newcommand\abs[1]{\left|#1\right|}
\newcommand\Cbbd{\mathbb{C}}
\newcommand\Rbbd{\mathbb{R}}
\newcommand\Zbbd{\mathbb{Z}}
\newcommand\Hcal{\mathcal{H}}
\newcommand{\rd}{\mathrm{d}}            
\newcommand{\ri}{\mathrm{i}}            
\begin{document}
\title{Sekiguchi\ts-Debiard operators at infinity}
\titlerunning{Sekiguchi\ts-Debiard operators}

\author{M.\,L.\,Nazarov and E.\,K.\,Sklyanin}
\authorrunning{Nazarov and Sklyanin}

\institute{Department of Mathematics, University of York, 
York YO10 5DD, United Kingdom }

\date{}

\maketitle


\thispagestyle{empty} 

\begin{abstract}
We construct a family of pairwise commuting operators  
such that the Jack symmetric functions of infinitely many variables 
$x_1,x_2,\,\ldots$ are their eigenfunctions.
These operators are defined as
limits at $N\to\infty$ of
renormalised Sekiguchi\ts-Debiard operators
acting on symmetric polynomials in the variables 
$x_1\lcd x_N$.
They are differential operators
in terms of the power sum variables $p_n=x_1^n+x_2^n+\ldots$ 
and we compute their symbols
by using the Jack reproducing kernel.
Our result yields a hierarchy of commuting Hamiltonians
for the quantum Calogero-Sutherland model with infinite number of
bosonic particles in terms of the collective variables of the model.
Our result also yields the elementary step operators
for the Jack symmetric functions. 

\end{abstract}

\newpage


\section{Introduction}


\subsection{Calogero-Sutherland model\/}

This quantum model 
describes a system of $N$ bosonic particles on a circle $\Rbbd/\pi\Zbbd$
with the Hamiltonian \cite{C,Su1,Su2}
$$
H_\text{CS}=-\,\frac12\,\sum_i\frac{\dd^2}{\dd q_i^2}
\,+\,
\sum_{i<j}\,\frac{\beta\ts(\beta-1)}{\sin^2(q_i-q_j)}
$$
where $0\le q_1,\ldots,q_N<\pi$.
Being translationally invariant $H_\text{CS}$ commutes~with the momentum
operator
$$
P_\text{\ts CS}=-\,\ri\,\,\sum_j\,\frac{\dd}{\dd q_j}\,.
$$
After eliminating the vacuum factor
$$
\om=\Big|\,\prod_{i<j}\sin(q_i-q_j)\,\Big|^{\,\beta}
$$
and then passing to the exponential variables 
$x_j=\exp\ts(\ts2\ts\ri\ts q_j)$
and the parameter $\al=\beta^{\ts-1}$ more common
in the mathematical literature, the Hamiltonian becomes
$$
\omega^{-1}\circ H_\text{CS}\circ\omega
\,=\,\frac2\al\,H^{(2)}_N+\frac{N^3-N}{6\,\al^2}
$$
where
\beq
\label{hamiltonian}
H^{(2)}_N=\,
\al\,\sum_{i}\, 
\Bigl(x_i\,\frac{\dd}{\dd{x_i}}\,\Bigr)^2+\,
\sum_{i<j}\,\frac{x_i+x_j}{x_i-x_j}\,
\Bigl(x_i\,\frac{\dd}{\dd{x_i}}-x_j\,\frac{\dd}{\dd{x_j}}\Bigr)\,.
\eeq
Respectively, $P_\text{CS}$ gives rise to the operator
$$
H^{\ts(1)}_N=\,
\frac12\,
P_\text{\ts CS}\,=\sum_j\,x_j\,\frac{\dd}{\dd x_j}\,.
$$

The operators $H^{(1)}_N$ and $H^{(2)}_N$ commute and 
act on symmetric polynomials of the variables $x_1\lcd x_N\ts$.
It is known that both operators can be included into a quantum
integrable hierarchy, that is into a polynomial ring 
of commuting differential operators with $N$ generators
of orders $1,\ldots,N$ respectively,
called the Sekiguchi\ts-Debiard operators \cite{D,S}.
The Jack symmetric polynomials \cite{M} are joint eigenfunctions of the
hierarchy. They are labelled by partitions of $0,1,2,\ldots$
with no more than $N$ non-zero parts.

In combinatorics it is quite common to extend various symmetric polynomials
to an infinite countable set of variables. These extensions are called 
\emph{symmetric functions\/}.
In particular, the extensions of the Jack symmetric polynomials are
well studied \cite{M}. 
They are labelled by partitions of $0,1,2,\ldots\,\ts$.
However, no explicit expressions 
for higher commuting Hamiltonians corresponding to the infinite set 
of variables have been yet available in the Jack case,
with an exception of a few lower order operators.
The main purpose of the present article is to fill up~the gap, by studying 
the limits of the Sekiguchi\ts-Debiard operators as 
$N\to\infty$ and~giving explicit expressions for the resulting commuting
Hamiltonians.

As another application of our result, we construct 
\emph{elementary step operators\/} for the  
Jack symmetric functions.
In terms of the labels, our operators correspond
to decreasing by $1$
any given non-zero part of a partion,
and to the operation on partitions inverse to that,
see our formulas \eqref{cshift} and \eqref{bshift} respectively.
For~the origins of this construction
see the work \cite{Sk} and references therein. 
For related but different results on the
Jack polynomials see the work \cite{LV}.


\subsection{Collective variables\/}

The standard way to treat the $N=\infty$ case is to rewrite the Hamiltonian
\Ref{hamiltonian} in terms of the power sums
(the ``collective variables'' in the condensed matter physics terminology)
\beq
\label{def-pn}
x_1^n+\ldots+x_N^n
\quad\ \text{where}\ \quad
n=1,\ldots,N
\eeq
and to take the limit at
$N\rightarrow\infty$ afterwards.
Denoting the limit of the power sum \eqref{def-pn} by $p_n$
where $n=1,2,\ldots$ 
the resulting Hamiltonian reads \cite{MP}
\begin{gather}
\notag
H^{\ts(2)}=\lim_{N\to\infty}
\al\,\bigl(\ts H^{\ts(2)}_N-N H^{\ts(1)}_N\ts\bigr)\,=
\\[4pt]
\notag
\sum_{m,n=1}^\infty\biggl(
\al\,(m+n)\,p_m\,p_n\ts\frac{\dd}{\dd\ts p_{m+n}}
+\al^2\ts m\,n\,p_{m+n}\ts\frac{\dd^2}{\dd\ts p_m\ts\dd\ts p_n}\,\biggr)\ +
\\
\label{H2}
(\al-1)\,\sum_{n=1}^\infty\,\al\,n^2\ts p_n\,\frac{\dd}{\dd\ts p_n}\ .
\end{gather}
The first and the second summands 
in the middle line of the above display
are known as
\textit{splitting terms\/} 
and 
\textit{joining terms\/} respectively,
see also \cite{AMOS} and \cite{CJ}. 

Consider the vector space $\La=\Cbbd\ts[\,p_1,p_2,\ldots\ts\,]$
and equip it 
with the operators $a_{\ts n}$ where $n\in\Zbbd\setminus\{0\}\,$,
defined on the polynomials $f\in\La$ by
$$
a_{\ts n}\ts(f)=
\left\{\begin{array}{ccl}
p_{-n}\,f&\quad\text{if}\quad n<0\ts,
\\[6pt]
\al\,n\,{\dd f}/{\dd\ts p_n}&\quad\text{if}\quad n>0\,;
\end{array}\right.
$$
see also the equality \eqref{pnast} below. The operators $a_n$ 
satisfy the relations 
\beq
\label{heisenberg-comm}
[\,a_{\ts m},a_{\ts n}\ts]=m\,\al\,\de_{m+n,0}\,.
\eeq
Thus $\La$ becomes a highest weight module~for 
an infinite-dimensional Heisenberg Lie algebra.
In terms of the operators $a_{\ts n}$ the Hamiltonian \Ref{H2} takes the form
$$
H^{(2)}=\sum_{m,n=1}^\infty
\bigl(\ts
a_{\ts-m}\ts a_{\ts-n}\ts a_{\ts m+n}+a_{\ts-m-n}\ts a_{\ts m}\ts a_{\ts n}
\ts\bigr)
+(\al-1)\sum_{n=1}^\infty n\,a_{\ts-n}\,a_{\ts n}\,.
$$
Similarly, the $H^{\ts(1)}_N$ yields a 
first order differential operator commuting with $H^{\ts(2)}$
\beq
\label{H1}
H^{(1)}=\lim_{N\rightarrow\infty} \al\ts H^{(1)}_N
=\,\sum_{n=1}^\infty\,\al\,n\,p_n\ts\frac{\dd}{\dd\ts p_n}\,=\,
\sum_{n=1}^\infty\,a_{\ts-n}\ts a_{\ts n}\,.
\eeq


\subsection{Quantum field theory\/}

In terms of the field $\phi(s)$ on the circle $S^1\equiv\Rbbd/2\pi\Zbbd$
$$
\phi(s)= \sum_{n\neq0}\,a_n\exp\ts(-\ts\ri\,n\ts s)
\quad\text{for}\quad s\in S^1
$$
the commutation relations \Ref{heisenberg-comm} read
$$
[\ts\phi(s),\phi(t)\ts]=2\ts\pi\ts\ri\ts\al\ts\de^{\ts\prime}(s-t)\,.
$$

Setting $a_0=0$ and using the Wick normal ordering $:\ :$ with
the operators $a_n$ for $n<0$ to the left and for $n>0$ to the right,
the Hamiltonian $H^{(1)}$ becomes
$$
\frac{1}{2\ts\al} \sum_{m+n=0} :a_{\ts m}\ts a_{\ts n\!}:\,
\,=\,
\frac{1}{2\al}\,\int_{S^1}\frac{\rd s}{2\pi} \,:\ts\phi^2(s):
$$
while $H^{(2)}$ becomes the Hamiltonian of 
quantized Benjamin-Ono equation~\text{\cite{AW,Pol}}
\begin{align*}
\frac13\, 
\sum_{l+m+n=0}  :\ts a_{\ts l}\ts a_{\ts m}\ts a_{\ts n\!}: 
&\ +\ \ 
\frac{\al-1}{2}\sum_{m+n=0} \abs{n} :a_{\ts m}\ts a_{\ts n\!}:\ =
\\[2pt]
\int_{S^1}\frac{\rd s}{2\pi}\ 
\Bigl(\,\,\frac13\,
:\phi^3(s):\,
&\ + \ \ \frac{1-\al}{2}\,:\phi'(s)\,\bigl(\Hcal\ts\phi\bigr)(s):
\,\Bigr)
\end{align*}
where $\Hcal$ stands for the Hilbert transform
$$
\bigl(\Hcal\ts\phi\bigr)(s) 
=\text{p.v.} 
\int_{S^1}\frac{\rd t}{2\pi}\,
\cot\frac{s-t}{2}\,\,\phi(t)\,.
$$

In the particular case $\al=1$ the Jack symmetric polynomials degenerate 
into Schur polynomials. The Benjamin-Ono equation respectively degenerates 
into the dispersionless KdV (also called Burgers) equation.
An explicit construction of a countable set of commuting Hamiltonians for
the quantum dispersionless KdV
can be obtained via boson-fermion correspondence
and is available in terms of recurrence relations \cite{P} or 
a generating function \cite{OP,R}.

The higher quantum Hamiltonians for any parameter $\al$  
are constructed in
the present article, see the theorem in Subsection \ref{mainres}.
Note that in the case $\al=1$ our Hamiltonians $A^{\ts(k)}$ are different
from those considered in \cite{OP,P,R} being 
rather their polynomial combinations,
see for example \eqref{HS1} and \eqref{HS2} below. 

In their turn, Jack symmetric polynomials can be regarded as 
degenerations of the Macdonald polynomials 
in the variables $x_1\lcd x_N$ 
also depending on two formal parameters $q$ and $t\,$. 
The Jack case corresponds to $q=t^{\ts \al}$ where $t\to1\,$.
The Sekiguchi\ts-Debiard operators can be then regarded as
degenerations of the Macdonald operators \cite{M}
acting on the symmetric polynomials in 
$x_1\lcd x_N\ts$. 
Our theorem generalizes to the Macdonald case \cite{NS},
see also the earlier works~\text{\cite{AK,Shi}.}


\subsection{Plan of the article}

In the next section 
we recall some basic facts from the theory of symmetric functions
and set up the notation. We then introduce the Jack polynomials and 
Sekiguchi\ts-Debiard differential operators. Our main 
tool is the notion of the symbol of an operator 
relative to the reproducing kernel for Jack polynomials. 
After establishing the basics, we state our main result 
which is an explicit formula for the symbol of the generating function
of commuting Hamiltonians. Then we explicitly construct
our elementary step operators for the Jack symmetric functions.
We finish Section \ref{section::symfun} with
reducing the proof of our theorem to certain determinantal identities
which are then proved in Section~\ref{section::Determinantal identities}.

In this article we generally keep to
the notation of the book \cite{M} for
symmetric functions. When using the results from \cite{M}
we will simply indicate their numbers within the book.
For example, the statement (1.11) from Chapter~I of the book
will be referred to as [I.1.11] assuming it is from~\cite{M}. 
We do not number our own
lemmas, propositions, theorems or corollaries
because we have only one of each.


\section{Symmetric functions}
\label{section::symfun}


\subsection{Monomial functions\/}
\label{subone}

Fix any field $\FF\,$. For any positive integer $N$ 
denote by $\La_N$ the $\FF$-algebra of symmetric 
polynomials in $N$ variables $x_1\lcd x_N\,$.
The algebra $\La_N$ is graded by the polynomial degree.
The substitution $x_N=0$ defines
a homomorphism $\La_N\to\La_{N-1}$ preserving the degree.
Here $\La_{\ts0}=\FF\,$.
The inverse limit of the sequence
$$
\La_1\leftarrow\La_2\leftarrow\ldots
$$ 
in the category of graded algebras is denoted by $\La\,$. 
The elements of 
$\La$ are called \textit{symmetric functions\/}.    
Following \cite{M} we will introduce some standard bases of $\La\,$.

Let $\la=(\,\la_1,\la_2,\ldots\,)$ be any partition of $\,0,1,2,\ldots\,\,$. 
The number of non-zero parts is called the {\it length\/} of 
$\la$ and is denoted by $\ell(\la)\,$. 
If $\ell(\la)\le N$ then
the sum of all distinct 
monomials obtained by permuting the $N$ variables in
$x_1^{\,\la_1}\ldots x_N^{\,\la_N}$  is denoted by
$m_\la(x_1\lcd x_N)\,$. 
The symmetric polynomials $m_\la(x_1\lcd x_N)$ with
$\ell(\la)\le N$ form a basis of the vector space $\La_N\,$. 
By definition, for $\ell(\la)\le N$
\begin{equation}
\label{mon}
m_\la(x_1\lcd x_N)=
\sum_{1\le i_1<\ldots<i_k\le N}
\ \sum_{\si\in\Sg_k}\ \cc_\la^{\,-1}\ 
x_{i_{\si(1)}}^{\,\la_1}\ldots x_{i_{\si(k)}}^{\,\la_k}
\end{equation}
where we write $k$ instead of $\ell(\la)\,$. Here
$\Sg_k$ is the symmetric group permuting the numbers $1\lcd k$
and
\begin{equation}
\label{cela}
\cc_\la=k_1!\,k_2!\,\ldots
\end{equation}
if $k_1,k_2,\ldots$ are the respective multiplicites of the parts $1,2,\ldots$
of $\la\,$. Further,
\begin{equation}
\label{monst}
m_\la(x_1\lcd x_{N-1},0)=
\left\{
\begin{array}{cl}
m_\la(x_1\lcd x_{N-1})
&\quad\textrm{if}\quad\,\ell(\la)<N\,;
\\[2pt]
0
&\quad\textrm{if}\quad\,\ell(\la)=N\,.
\end{array}
\right.
\end{equation}
Hence for any fixed partition $\la$ the sequence of polynomials
$m_\la(x_1\lcd x_N)$ with $N\ge\ell(\la)$ 
has a limit in $\La\,$. This limit is called
the \textit{monomial symmetric function\/}
corresponding to $\la\,$. Simply omitting the variables,
we will denote the limit by $m_\la\,$.
With $\la$ ranging over all partitions of $0,1,2\ldots$ 
the symmetric functions $m_\la$ form a basis of the vector space $\La\,$.
Note that if $\ell(\la)=0$ then we set $m_\la=1\,$.


\subsection{Power sums\/}
\label{subtwo}

For each $n=1,2,\ldots$ denote $p_n(x_1\lcd x_N)=x_1^n+\ldots+x_N^n\,$.
When the index $n$ is fixed
the sequence of symmetric polynomials $p_n(x_1\lcd x_N)$ with
$N=1,2,\ldots$ has a limit in $\La\,$, called
the \textit{power sum symmetric function} of degree $n\,$.
We denote the limit by $p_n\,$. 
More generally, for any partition $\la$ put
\begin{equation}
\label{pla}
p_\la=p_{\la_1}\,p_{\la_2}\ldots
\end{equation}
where we set $p_0=1\,$. The elements $p_\la$
form another basis of $\La\,$. In other words,
the elements $p_1,p_2,\ldots$ are
free generators of the commutative algebra $\La$ over $\FF\,$.

The basis of $p_\la$ can be related to the basis of
monomial symmetric functions as follows.
For any two partitions $\la$ and $\mu$
denote by $R_{\ts\la\mu}$ the number of mappings 
$\theta:\{1\lcd\ell(\mu)\}\to\{1,2,\ldots\,\}$ such that
\begin{equation}
\label{refine}
\sum_{\theta(j)=i}\mu_j\,=\,\la_i
\quad\text{for each}\quad i=1,2,\ldots\,.
\end{equation}
For any such $\theta$ the partition $\mu$ in \eqref{refine}
is called a \textit{refinement} of $\la\,$.
Note that if $R_{\ts\la\mu}\neq0$ then $\la$ and $\mu$ are partitions
of the same number. Moreover, then by [I.6.10] we have
$\mu\le\la$ in the \textit{natural partial ordering\/} of partitions:
$$
\mu_1\le\la_1\,,\ \,
\mu_1+\mu_2\le\la_1+\la_2\,,\ \,
\ldots\ \,.
$$
By [I.6.9] we have
\begin{equation}
\label{pm}
p_\mu=\sum_\mu\,R_{\ts\la\mu}\,m_\la\,.
\end{equation}


\subsection{Jack functions\/}
\label{subjack}

Now let $\FF$ be the field $\QQ(\al)$
where $\al$ is another variable. Define a bilinear form $\langle\ ,\,\rangle$ 
on the vector space $\La$ by setting for any $\la$ and $\mu$
\begin{equation}
\label{jackprod}
\langle\,p_\la,p_\mu\ts\rangle=\al^{\ell(\la)}\ts z_\la\,\de_{\la\mu}
\end{equation}
where
$$
z_\la=1^{k_1}k_1!\,2^{k_2}k_2!\ts\,\ldots
$$
in the notation \eqref{cela}. This form is obviously symmetric
and non-degenerate. By [Ex.\,VI.4.2]
there exists a unique family of elements $P_\la\in\La$ such that
$$
\langle\,P_\la,P_\mu\ts\rangle=0
\quad\text{for}\quad
\la\neq\mu
$$
and such that any $P_\la$ equals $m_\la$ 
plus a linear combination of the elements $m_\mu$
with $\mu<\la$ in the natural partial ordering.
The elements $P_\la\in\La$ are called 
the \textit{Jack symmetric functions\/}.
Alternatively, they can be defined as follows.

Denote by $\De(x_1\lcd x_N)$ the {\it Vandermonde polynomial\/} 
of $N$ variables
$$
\det\Big[x_i^{\,N-j}\Big]{\phantom{\big[}\!\!}_{i,j=1}^N=
\prod_{1\le i<j\le N}(x_i-x_j)\,.
$$
Put
\begin{equation}
\label{dnu}
S_N(u)=
\De(x_1\lcd x_N)^{-1}\cdot
\det\Big[\,
x_i^{\,N-j}\bigl(\ts u+j-1-\al\,x_i\ts\dd_i\ts\bigr)
\Big]{\phantom{\big[}\!\!}_{i,j=1}^N
\hspace{-8pt}
\end{equation}
where $u$ is a variable and $\dd_i$ is the operator
of partial derivation relative to $x_i\,$. 
Here the determinant is defined as the alternated sum
\begin{equation}
\label{sedeb}
\sum_{\si\in\Sg_N}
(-1)^{\si}\,
\prod_{i=1}^N\,\,\bigl(\,
x_i^{\,N-\si(i)}\bigl(\ts u+\si(i)-1-\al\,x_i\ts\dd_i\ts\bigr)\bigr)
\end{equation}
where as usual $(-1)^{\si}$ denotes the sign of permutation $\si\,$.
In every product over $i=1\lcd N$ appearing in \eqref{sedeb}
the operator factors pairwise commute, 
hence their ordering does not matter. Further,
$S_N(u)$ is a polynomial in the variable $u$ with pairwise
commuting operator coefficients
preserving the space $\La_N\,$, see for instance [Ex.\,VI.3.1].
We will call the restrictions of these
coefficients to the space $\La_N$ the 
\textit{Sekiguchi\ts-Debiard operators\/}. 
By [Ex.\,VI.4.2] the latter operators
have a common eigenbasis in $\La_N$ parametrized by
partitions $\la$ of length $\ell(\la)\le N\ts$. 
The eigenvectors are called the 
\textit{Jack symmetric polynomials\/}. 

For each $\la$ with $\ell(\la)\le N$
there is an eigenvector denoted by $P_\la(x_1\lcd x_N)$
which is equal to $m_\la(x_1\lcd x_N)$ plus
a linear combination of the polynomials 
$m_\mu(x_1\lcd x_N)$ with $\mu<\la\,$ and $\ell(\mu)\le N\ts$. 
It turns out that each coefficient
in this linear combination does not depend on $N\ts$.
Note that if $\la$ and $\mu$ are any two partitions of the same number
such that $\la\ge\mu\,$, then $\ell(\la)\le\ell(\mu)$ due to
[I.1.11]. It follows that the polynomials
$P_\la(x_1\lcd x_N)$
enjoy the same \textit{stability property} as the 
polynomials $m_\la(x_1\lcd x_N)$ in \eqref{monst}:
\begin{equation}
\label{jackst}
P_\la(x_1\lcd x_{N-1},0)=
\left\{
\begin{array}{cl}
P_\la(x_1\lcd x_{N-1})
&\quad\textrm{if}\quad\,\ell(\la)<N\,;
\\[2pt]
0
&\quad\textrm{if}\quad\,\ell(\la)=N\,.
\end{array}
\right.
\end{equation}
In particular, the sequence of polynomials
$P_\la(x_1\lcd x_N)$ with $N\ge\ell(\la)$
has a limit in $\La\,$. This is exactly
the Jack symmetric function $P_\la\,$.
The eigenvalues of Sekiguchi\ts-Debiard operators acting on $\La_N$ 
are also known. By [Ex.\,VI.4.2]  
\begin{equation}
\label{deigen}
S_N(u)\,P_\la(x_1\lcd x_N)\,=\,
\prod_{i=1}^N\,\bigl(\ts u+i-1-\al\,\la_i\ts\bigr)
\cdot P_\la(x_1\lcd x_N)\,.
\end{equation}


\subsection{Reproducing kernel\/}
\label{reproker}

In this subsection we will regard the elements of $\La$
as infinite sums of finite products of the variables
$x_1,x_2,\ldots\,\,$. For instance, we have
$$
p_n=x_1^n+x_2^n+\ldots
$$
for any $n\ge1\,$. When we need to distinguish 
$x_1,x_2,\ldots$ from any other variables,
we will write $f(\ts x_1,x_2,\ldots\ts)$ instead of any $f\in\La\,$.
Now let $y_1,y_2,\ldots$ be
variables independent of $x_1,x_2,\ldots\,\,$. 
According to [VI.10.4]
with the bilinear form \eqref{jackprod} 
one associates the \textit{reproducing kernel}
\begin{equation}
\label{kernel}
\Pi\,=\prod_{i,j=1}^\infty 
(\ts1-x_i\ts y_j)^{-1/\al}.
\end{equation}
This $\Pi$ should be regarded as an infinite sum of
monomials in $x_1,x_2,\ldots$ and in $y_1,y_2,\ldots$ by expanding 
the factor corresponding to $i,j$ as a series at $x_i\ts y_j\to0\,$.
 
The property of $\Pi$ most useful for us can be stated as 
the following lemma.
For any $f\in\La$
denote by $f^{\ts\ast}$ the operator on $\La$
adjoint to the multiplication by $f$
relative to the bilinear form \eqref{jackprod}. 
Note that here $f=f(\ts x_1,x_2,\ldots\ts)\,$.

\begin{lem}
We have
\begin{equation}
\label{useful}
f^{\ts\ast}(\Pi)/\Pi=f(\ts y_1,y_2,\ldots\ts)\,.
\end{equation}
\end{lem}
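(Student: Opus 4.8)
The plan is to reduce \eqref{useful} to the power-sum generators and to exploit the defining (reproducing) property of $\Pi$. First I would record that property in the form I need it: writing $\langle\ ,\ \rangle_x$ for the bilinear form \eqref{jackprod} applied in the variables $x_1,x_2,\ldots$ alone, the kernel \eqref{kernel} satisfies $\langle\ts g(\ts x_1,x_2,\ldots\ts),\ts\Pi\ts\rangle_x=g(\ts y_1,y_2,\ldots\ts)$ for every $g\in\La$. This is exactly what [VI.10.4] gives once one expands $\Pi=\sum_\la u_\la(x)\,v_\la(y)$ over any pair of bases dual with respect to \eqref{jackprod}; taking $u_\la=p_\la$ and $v_\la=p_\la/(\al^{\ell(\la)}z_\la)$ and summing recovers the product \eqref{kernel}, and simultaneously shows that $\Pi$ is a genuine element of the degreewise completion of $\La\otimes\La$ (with each $x_i$ and each $y_j$ assigned degree $1$), so that all the identities below can be read off one finite-dimensional graded component at a time.

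The main step is then short. For $f\in\La$ and any $g\in\La$, the definition of the adjoint gives $\langle\ts g(x),\ts f^{\ts\ast}(\Pi)\ts\rangle_x=\langle\ts f(x)\ts g(x),\ts\Pi\ts\rangle_x$, and the reproducing property applied to the element $fg\in\La$ turns the right-hand side into $f(\ts y_1,y_2,\ldots\ts)\,g(\ts y_1,y_2,\ldots\ts)$. On the other hand $f(\ts y_1,y_2,\ldots\ts)$ involves no $x$-variable, so it can be pulled outside $\langle\ ,\ \rangle_x$, identifying the same expression with $\langle\ts g(x),\ts f(\ts y_1,y_2,\ldots\ts)\,\Pi\ts\rangle_x$. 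Since $f^{\ts\ast}(\Pi)$ and $f(\ts y_1,y_2,\ldots\ts)\,\Pi$ pair identically with every $g$, and the form \eqref{jackprod} is non-degenerate on each graded component, they coincide; this is \eqref{useful}.

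I would also note the more computational route, in case one prefers to stay with the explicit product \eqref{kernel}. Since taking adjoints of (commuting) multiplication operators is an algebra homomorphism $\La\to\operatorname{End}(\La)$, it is enough to verify \eqref{useful} for $f=p_n$. For these one has $p_n^{\ts\ast}=\al\ts n\,\dd/\dd\ts p_n$ (this is \eqref{pnast}, an immediate consequence of \eqref{jackprod}), while writing $\Pi=\exp\bigl(\sum_{m\ge1}p_m(x)\,p_m(y)/(\al\ts m)\bigr)$ and applying the chain rule gives $\al\ts n\,\dd\ts\Pi/\dd\ts p_n=p_n(\ts y_1,y_2,\ldots\ts)\,\Pi$; the passage to monomials in the $p_n$, and hence to all of $f$, is immediate because each $p_m^{\ts\ast}$ differentiates only in the $x$-variables and so commutes with multiplication by any $p_n(\ts y_1,y_2,\ldots\ts)$.

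There is no serious obstacle here; the only thing requiring a little care is the bookkeeping around the infinite product defining $\Pi$ — one must make sure that $f^{\ts\ast}$ may be applied to $\Pi$ term by term and that the two sides of \eqref{useful} are being compared inside one finite-dimensional graded component of the completion of $\La\otimes\La$. Both points are handled once $\Pi$ is rewritten as the degree-preserving sum $\sum_\la p_\la(x)\,p_\la(y)/(\al^{\ell(\la)}z_\la)$.
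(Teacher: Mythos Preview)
Your proposal is correct. Your second, ``computational'' route is exactly the paper's own proof: reduce to $f=p_n$ using that the $p_n$ generate $\La$, invoke $p_n^{\ts\ast}=\al\ts n\,\dd/\dd p_n$ from \eqref{jackprod}, rewrite $\Pi$ as $\exp\bigl(\sum_{m\ge1}p_m(x)\,p_m(y)/(\al\ts m)\bigr)$, and read off the derivative.

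Your first argument, by contrast, is a slightly more conceptual alternative the paper does not give. You use the reproducing identity $\langle g(x),\Pi\rangle_x=g(y)$ directly and then a one-line adjointness computation, which proves \eqref{useful} for all $f\in\La$ at once without ever writing $p_n^{\ts\ast}$ explicitly. This is cleaner in that it makes the role of $\Pi$ as a kernel transparent and would carry over verbatim to any reproducing kernel for any non-degenerate form, not just the Jack form. The paper's argument, on the other hand, is more hands-on and has the side benefit of producing the concrete formula \eqref{pnast}, which the paper actually uses later (e.g.\ after the theorem in Subsection~\ref{mainres} and in the step-operator discussion). Your handling of the completion issue---comparing both sides degree by degree via $\Pi=\sum_\la p_\la(x)\,p_\la(y)/(\al^{\ell(\la)}z_\la)$---is the right way to make the infinite sums rigorous in either approach.
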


\begin{proof}
The commutative algebra $\La$ is 
generated by the elements $p_n$ with $n\ge1\,$.
Therefore it suffices to prove \eqref{useful} for $f=p_n$ only.
Consider the operator $\dd/\dd\ts p_n$ of derivation 
in $\La$ relative to $p_n=p_n(\ts x_1,x_2,\ldots\ts)\,$.
By the definition \eqref{jackprod} we have
\begin{equation}
\label{pnast}
p_n^{\ts\ast}=\al\,n\,\dd/\dd\ts p_n
\end{equation}
On the other hand, by taking the logarithm of \eqref{kernel} 
and then exponentiating,
$$
\Pi\,=\,\exp\,\Bigl(\,\,
\sum_{n=1}^\infty\,
{p_n(\ts x_1,x_2,\ldots\ts)\,p_n(\ts y_1,y_2,\ldots\ts)}/{\al\,n}
\,\Bigr)\ts.
$$
The relation \eqref{useful} for $f=p_n$ follows from
the last two displayed equalities.
\qed
\end{proof}


\subsection{Main result\/}
\label{mainres}

Let $\FF=\QQ(\al)$ as in the previous two subsections.
For $N\ge1$ let
$\rho_N$ be the homomorphism $\La_N\to\La_{N-1}$
defined by setting $x_N=0\,$, as
in the beginning of Subsection \ref{subone}. Denote
\begin{equation}
\label{snu}
A_N(u)=S_N(u)/(u)_N
\end{equation}
where we employ the \textit{Pochhammer symbol}
$$
(u)_N=u\,(u+1)\ldots(u+N-1)\,.
$$
The right hand side of the equation \eqref{snu} is regarded as
a rational function of $u$ with the values
being operators acting on the space $\La_N\,$.
Due to the stability property \eqref{jackst}
of Jack symmetric polynomials,
the equation \eqref{deigen} implies~that
$$
\rho_N\,A_N(u)=A_{N-1}(u)\,\rho_N
$$
where $A_{\ts0}(u)=1\,$.
So the sequence of $A_N(u)$ with $N\ge1$ has a limit
at $N\to\infty$. This limit can be written as a series
$$
A(u)=1+A^{\ts(1)}/(u)_1+A^{\ts(2)}/(u)_2+\ldots
$$
where the 
inverses of the $(u)_1 ,(u)_2 ,\,\ldots$ can be regarded as series in $u^{-1}$  
whereas $A^{\ts(1)},A^{\ts(2)},\,\ldots$
are certain linear operators acting on
$\La\,$. By definition, the Jack symmetric functions are
joint eigenvectors of these operators. In particular,
the operators $A^{\ts(1)},A^{\ts(2)},\,\ldots$ 
pairwise commute, and are self-adjoint
relative to the bilinear form \eqref{jackprod}.
We call them 
the \textit{Sekiguchi\ts-Debiard operators at infinity\/}.
Due to the property \eqref{jackst} their definition
immediately implies that
$$
A^{\ts(k)}P_\la=0
\quad\text{if}\quad
\ell(\la)<k\,.
$$

It is also transparent from \eqref{deigen} that 
for any homogeneous $f\in\La$
$$
A^{(1)}f=-\,\al\,\deg f\,.
$$  
Hence in the notation \eqref{H1}
\beq
\label{HS1}
-\,A^{\ts(1)}=H^{\ts(1)}
\ts.
\eeq
The operator $A^{(2)}$ is well studied
[Ex.\,VI.4.3]. In particular, it is known~that
\beq
\label{HS2}
A^{\ts(1)}(\ts A^{\ts(1)}\ts\!+1)-2\,A^{\ts(2)}=
H^{\ts(2)}
\eeq
in the notation \eqref{H2}.
The main result of our article is the more general

\begin{teo}
In the notation \eqref{cela}
for each\/ $k=1,2,\ldots$ we have
\begin{equation}
\label{basic}
A^{\ts(k)}=(-1)^k\sum_{\ell(\la)=k}
\cc_\la\,m_\la\,m_\la^{\ts\ast}
\end{equation}
where $\la$ ranges over all partitions of length $k\,$.
\end{teo}

By inverting the relation \eqref{pm} any monomial symmetric function
$m_\la$ can be expressed as a linear combination of the functions
$p_\mu$ where $\la\,,\mu$ are partitions of the same number
and $\la\le\mu\,$. By substituting into \eqref{basic}
and using \eqref{pla},\eqref{pnast}
one can write each operator $A^{\ts(k)}$
in terms of $p_n$ and $\dd/\dd\ts p_{n}$ where $n=1,2,\ldots\,\,$.
In particular, one recovers the above formulas 
for the operators $A^{\ts(1)}$ and $A^{\ts(2)}$.


\subsection{Step operators\/}
\label{shiftop}

In this subsection we will get a corollary to our theorem
by using the following particular case of the {\it Pieri rule}
for Jack symmetric functions. By [VI.6.24] for any partition $\mu$
the product $p_1\ts P_\mu$ equals the linear combination of
the symmetric functions $P_\la$ with the coefficients
\begin{equation}
\label{bml}
\prod_{j=1}^{i-1}\,
\frac{\,\al\ts(\la_i-\la_j)-i+j-1\,}{\al\ts(\la_i-\la_j-1)-i+j}
\,\cdot\,
\prod_{j=1}^{i-1}\,
\frac{\,\al\ts(\la_i-\la_j-1)-i+j+1\,}{\al\ts(\la_i-\la_j)-i+j}
\end{equation}
where $\la$ ranges over all partitions such that 
the sequence $\la_1,\la_2,\ldots$ is obtained from $\mu_1,\mu_2,\ldots$
by increasing one of its terms by $1$ and $i$ is
the index of the term.
 
Further, by [VI.6.19] the above stated equality implies that for any 
partition $\la$ the symmetric function 
$\dd\ts P_\la/\dd\,p_1=\al^{-1}p_1^{\ast}\ts P_\la$ 
equals the linear combination of the $P_\mu$ with the coefficients
\begin{equation}
\label{clm}
\prod_{j=1}^{\la_i-1}
\frac{\,\al\ts(\la_i-j-1)+\lap_j-i+1\,}{\al\ts(\la_i-j)+\lap_j-i}
\,\,\cdot
\prod_{j=1}^{\la_i-1}
\frac{\al\ts(\la_i-j+1)+\lap_j-i}{\,\al\ts(\la_i-j)+\lap_j-i+1\,}
\end{equation}
where $\mu$ ranges over all partitions such that 
the sequence $\mu_1,\mu_2,\ldots$ is obtained from $\la_1,\la_2,\ldots$
by decreasing one of its terms by $1$ and $i$ is
the index of the term. As usual, here $\lap=(\ts\lap_1,\lap_2,\ldots\ts)$ 
is the partition conjugate to $\la\ts$. 
 
Now define the linear operators 
$B^{\ts(1)},B^{\ts(2)},\,\ldots$ acting on $\La$
by setting
$$
B(u)=B^{\ts(1)}/(u)_1+B^{\ts(2)}/(u)_2+\ldots
$$
where
\begin{equation}
\label{svbu}
[\,p_1,A(u)\ts]=\al\,B(u)
\end{equation}
while the square brackets denote the operator commutator.
Further, define the operators 
$C^{\ts(1)},C^{\ts(2)},\,\ldots$ acting on $\La$
by setting
$$
C(u)=C^{\ts(1)}/(u)_1+C^{\ts(2)}/(u)_2+\ldots
$$
where
\begin{equation}
\label{svcu}
[\ts A(u),\dd/\dd\ts p_1\ts]=C(u)\ts.
\end{equation}
Our definitions of the operators
$B^{\ts(1)},B^{\ts(2)},\,\ldots$ and 
$C^{\ts(1)},C^{\ts(2)},\,\ldots$ are
motivated by the results of \cite{SV}.
Our theorem yields explicit expressions for these operators, 
stated as the following corollary.
The corollary will then allow us
to construct the elementary step 
operators  for Jack symmetric functions, see
\eqref{bshift} and \eqref{cshift}.

\begin{cor}
For every $k=0,1,2,\ldots$ we have the equalities
\begin{align}
\label{boper}
B^{\ts(k+1)}&=(-1)^k\sum_{\ell(\mu)=k}
\cc_{\ts\mu\sqcup1}\,m_{\ts\mu\sqcup{1}}\,m_\mu^{\ts\ast}\,,
\\[4pt]
\label{coper}
C^{\ts(k+1)}&=(-1)^k\sum_{\ell(\mu)=k}
\cc_{\ts\mu\sqcup1}\,m_\mu\,m_{\ts\mu\sqcup{1}}^{\ts\ast}
\end{align}
where $\mu\sqcup1$ denotes the partition obtained from $\mu$
by appending one extra part~$1\ts$. 
\end{cor}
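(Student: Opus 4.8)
The plan is to read off $B^{\ts(k+1)}$ and $C^{\ts(k+1)}$ by inserting the formula \eqref{basic} into their defining relations \eqref{svbu} and \eqref{svcu}. Since the series $1/(u)_1,1/(u)_2,\ldots$ in $u^{-1}$ have pairwise distinct lowest degrees, those relations amount to $\al\ts B^{\ts(k)}=[\ts p_1,A^{\ts(k)}\ts]$ and $\al\ts C^{\ts(k)}=[\ts A^{\ts(k)},p_1^{\ts\ast}\ts]$ for every $k\ge1$, using $p_1^{\ts\ast}=\al\,\dd/\dd\ts p_1$ from \eqref{pnast}. First I would note that \eqref{coper} is a formal consequence of \eqref{boper}: each $A^{\ts(k)}$ is self-adjoint, the form \eqref{jackprod} is symmetric, and multiplication operators on $\La$ satisfy $(fg)^{\ts\ast}=g^{\ts\ast}f^{\ts\ast}$ and $(f^{\ts\ast})^{\ts\ast}=f$, so taking the adjoint of $\al\ts B^{\ts(k)}=[\ts p_1,A^{\ts(k)}\ts]$ gives $\al\ts(B^{\ts(k)})^{\ts\ast}=[\ts A^{\ts(k)},p_1^{\ts\ast}\ts]=\al\ts C^{\ts(k)}$, i.e.\ $C^{\ts(k)}=(B^{\ts(k)})^{\ts\ast}$. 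Since $(m_{\ts\mu\sqcup1}\ts m_\mu^{\ts\ast})^{\ts\ast}=m_\mu\ts m_{\ts\mu\sqcup1}^{\ts\ast}$ and the $\cc_{\ts\mu\sqcup1}$ are scalars, \eqref{coper} is the termwise adjoint of \eqref{boper}.

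To prove \eqref{boper} I would compute $[\ts p_1,A^{\ts(k+1)}\ts]$ from \eqref{basic}. As multiplication by $p_1$ commutes with multiplication by $m_\la$, we have $[\ts p_1,m_\la\,m_\la^{\ts\ast}\ts]=m_\la\,[\ts p_1,m_\la^{\ts\ast}\ts]$. Writing $m_\la$ as a polynomial in $p_1,p_2,\ldots$ and using $p_n^{\ts\ast}=\al\ts n\,\dd/\dd\ts p_n$ exhibits $m_\la^{\ts\ast}$ as a differential operator in the $p_n$ with constant coefficients, whence $[\ts p_1,m_\la^{\ts\ast}\ts]=-\,\al\,(\dd\ts m_\la/\dd\ts p_1)^{\ts\ast}$, the adjoint of multiplication by the symmetric function $\dd\ts m_\la/\dd\ts p_1$. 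Combining this with \eqref{basic} and dividing by $\al$ yields
$$
B^{\ts(k+1)}=(-1)^k\sum_{\ell(\la)=k+1}\cc_\la\,m_\la\,(\dd\ts m_\la/\dd\ts p_1)^{\ts\ast}\,.
$$

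The remaining, combinatorial, ingredient is the identity $\dd\ts m_\la/\dd\ts p_1=m_\mu$ when $\la=\mu\sqcup1$ and $\dd\ts m_\la/\dd\ts p_1=0$ when $1$ is not a part of $\la$. Because the transition matrix in \eqref{pm} has integer entries, $\dd\ts m_\la/\dd\ts p_1$ does not depend on $\al$, so it may be computed at $\al=1$, where \eqref{jackprod} is the Hall pairing and $p_1^{\ts\ast}=\dd/\dd\ts p_1$ is adjoint to multiplication by $p_1=h_1$; as $\{m_\la\}$ and $\{h_\mu\}$ are dual bases for the Hall pairing by [I.4.6] and $h_1\ts h_\mu=h_{\ts\mu\sqcup1}$, the coefficient of $m_\mu$ in $\dd\ts m_\la/\dd\ts p_1$ equals $\langle\,\dd\ts m_\la/\dd\ts p_1\ts,h_\mu\,\rangle=\langle\,m_\la,h_{\ts\mu\sqcup1}\,\rangle=\de_{\la,\ts\mu\sqcup1}$. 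Substituting this into the last display, only those $\la$ having $1$ among their parts survive, i.e.\ $\la=\mu\sqcup1$ with $\ell(\mu)=k$, where moreover $\cc_\la=\cc_{\ts\mu\sqcup1}$; this is precisely the right-hand side of \eqref{boper}, which together with the first paragraph proves the corollary. I expect the only real care to be needed in the bookkeeping — the index shift between $A(u)$, $B(u)$ and $C(u)$, the sign in the commutator $[\ts p_1,m_\la^{\ts\ast}\ts]$, and the monomial-function identity for $\dd/\dd\ts p_1$ — all of which are routine.
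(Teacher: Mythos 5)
Your argument is correct and is essentially the paper's own proof: the authors likewise observe that $B(u)^{\ts\ast}=C(u)$ reduces the two equalities to one another, and obtain the remaining one by commuting $A(u)$ in the form \eqref{basic} past $p_1$ (respectively $\dd/\dd\ts p_1$), the key input being $\dd\ts m_\la/\dd\ts p_1=m_\mu$ for $\la=\mu\sqcup1$ and $0$ otherwise. The only cosmetic differences are that the paper proves \eqref{coper} first and cites this last identity as [Ex.\,I.5.3] of Macdonald, whereas you prove \eqref{boper} first and rederive the identity via duality of $\{m_\la\}$ and $\{h_\mu\}$ under the Hall pairing.
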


\begin{proof}
The equalities \eqref{boper} and \eqref{coper}
follow from each other, because by \eqref{pnast}
$$
B(u)^{\ts\ast}=C(u)\,.
$$ 
But the equality \eqref{coper} follows from \eqref{basic}
and \eqref{svcu} by using [Ex.\,I.5.3].
\qed
\end{proof}

By the definition \eqref{snu} of the series $A(u)$, for any partition $\la$ 
we~have
\begin{equation}
\label{ajack}
A(u)\,P_\la\,=\,
\prod_{i=1}^{\infty}\,\frac{u+i-1-\al\,\la_i}{u+i-1}
\cdot P_\la\,,
\end{equation}
see \eqref{deigen}. In the infinite product 
displayed above the only factors different from $1$ are those
corresponding to $i=1\lcd\ell(\la)\ts$.
For any such $i$ consider the product
\begin{equation}
\label{iskip}
\frac1{u+i-1}\,\ts
\prod_{\substack{j=1\\j\neq\ts i\ts}}^{\ell(\la)}\,
\frac{u+j-1-\al\,\la_j}{u+j-1}\ .
\end{equation}

It follows from the definition \eqref{svbu}
that for any given partition $\mu$ we have 
\begin{equation}
\label{parti}
B(u)\,P_\mu=\sum_{\la}\,B_{\ts\la\mu}(u)\,P_\la
\end{equation}
where $B_{\ts\la\mu}(u)$ is the product of \eqref{bml} by \eqref{iskip}
while $\la$ ranges over all partitions such that 
the sequence $\la_1,\la_2,\ldots$ is obtained from $\mu_1,\mu_2,\ldots$
by increasing one of its terms by $1$ and $i$ is
the index of the term. Similarly, it follows 
from the definition \eqref{svcu} that
for any given partition $\la$ we have
\begin{equation}
\label{partii}
C(u)\,P_\la=\sum_{\mu}\,C_{\mu\la}(u)\,P_\mu
\end{equation}
where $C_{\mu\la}(u)$ is the product of \eqref{clm} by \eqref{iskip} and by $\al$
while $\mu$ ranges over all partitions such that 
the sequence $\mu_1,\mu_2,\ldots$ is obtained from $\la_1,\la_2,\ldots$
by decreasing one~of its terms by $1$ and $i$ is
the index of the term. 

Let the partition $\la$ be fixed. 
Then for $i=1\lcd\ell(\la)$ the elements $\al\ts\la_i-i+1$ 
of the field $\QQ(\al)$ are pairwise distinct.
Therefore by the part (i) of the corollary
for the partition $\mu$ corresponding to any of these indices $i$ we have
\begin{equation}
\label{bshift}
B(\ts\al\ts\la_i-i+1\ts)\,P_\mu=B_{\ts\la\mu}(\ts\al\ts\la_i-i+1\ts)\,P_\la
\end{equation}
where the coefficient $B_{\ts\la\mu}(\ts\al\ts\la_i-i+1\ts)$
is the product of \eqref{bml} by
\begin{equation}
\label{iskipla}
\prod_{j=1}^{\ell(\la)}\ts\frac1{\al\ts\la_i-i+j}
\,\,\cdot\,
\prod_{\substack{j=1\\j\neq i}}^{\ell(\la)}\ts
{(\ts\al\ts\la_i-\al\ts\la_j-i+j\ts)}\,.
\end{equation}
The left hand side of the equality \eqref{bshift} should be understood
as the value in $\La$ of the rational function $B(u)\,P_\mu$ at the point
$u=\al\ts\la_i-i+1\ts$. Similarly, by (ii)
\begin{equation}
\label{cshift}
C(\ts\al\ts\la_i-i+1\ts)\,P_\la=C_{\mu\la}(\ts\al\ts\la_i-i+1\ts)\,P_\mu
\end{equation}
where 
$C_{\mu\la}(\ts\al\ts\la_i-i+1\ts)$
is the product of \eqref{clm} by \eqref{iskipla} and by
$\al\ts$.


\subsection{Reduction of the proof}

In this subsection we reduce the proof of our theorem
to proving a certain determinantal identity
for each $N=1,2,\ldots\,\,$.
This identity will be proved
in the next section by using the induction on $N\ts$. 

By the lemma from Subsection \ref{reproker}
our theorem is equivalent to the equality
\begin{equation}
\label{syminf}
A(u)(\Pi)/\Pi\,=\,
\sum_{k=0}^\infty\ 
\frac{(-1)^k}{(u)_k}
\sum_{\ell(\la)=k}\,
\cc_\la\,m_\la(\ts x_1,x_2,\ldots\ts)\,m_\la(\ts y_1,y_2,\ldots\ts)
\end{equation} 
where the coefficients of the series 
$A(u)$ are regarded as operators acting on the
symmetric functions in the variables $x_1,x_2,\ldots\ $.
Here we set $(u)_{\ts0}=1\,$.
It suffices to prove for each $N=1,2,\ldots$
the restriction of the functional equality \eqref{syminf} to 
\begin {equation}
\label{xrest}
x_{N+1}=x_{N+2}=\ldots=0\,.
\end{equation}

By the very definition of $A(u)$
the restriction of the left hand side of \eqref{syminf}
to \eqref{xrest}  as of a function in the variables 
$x_1,x_2,\ldots$ equals 
\begin{equation}
\label{pmsp}
A_N(u)(\Pi_N)/\Pi_N
\end{equation}
where we denote
$$
\Pi_N\,=\,
\prod_{i=1}^N\, 
\prod_{j=1}^\infty\,\ts 
(\ts1-x_i\ts y_j)^{-1/\al}.
$$ 
Simply by the definition of the monomial symmetric function 
$m_\la(\ts x_1,x_2,\ldots\ts)$ its restriction to \eqref{xrest}
is $m_\la(\ts x_1\lcd x_N)$ if $\ell(\la)\le N$
and vanishes if $\ell(\la)>N\ts$. 
Therefore the restriction of the right hand side of
\eqref{syminf} to \eqref{xrest} equals
\begin{equation}
\label{symfininf}
\sum_{k=0}^N\ 
\frac{(-1)^k}{(u)_k}
\sum_{\ell(\la)=k}\,
\cc_\la\,m_\la(\ts x_1\lcd x_N)\,m_\la(\ts y_1,y_2,\ldots\ts)\,.
\end{equation} 
Further, due to [VI.2.19]
to prove the equality of \eqref{pmsp} to \eqref{symfininf}
it suffices~to~set 
$$
y_{N+1}=y_{N+2}=\ldots=0\,.
$$
However, we will keep working with the infinite collection of variables
$y_1,y_2,\ldots\,\,$.
This will simplify the induction argument in the next section. 

Let us compute the function \eqref{pmsp}.
It depends on the variable $u$ rationally. It is also symmetric in 
either of the two collections of variables
$x_1\lcd x_N$ and $y_1,y_2,\ldots\,\,$. This function can be obtained
by applying to the identity function $1$
the result of conjugating $A_N(u)$ by the operator of
multiplication by $\Pi_N\ts$. 

Conjugating the operator \eqref{sedeb}
by $\Pi_N$ amounts to replacing
each $\dd_i$ in \eqref{sedeb} with the sum 
$$
\dd_i+\al^{-1}\,
\sum_{l=1}^\infty\,\ts\frac{y_l}{1-x_i\ts y_l}\,.
$$

\medskip\noindent
Here we are just adding to each $\dd_i$ the logarithmic derivative
of the function~$\Pi_N$ relative to $x_i\,$. 
Hence conjugating \eqref{sedeb} by the multiplication by 
$\Pi_N$ yields 
\begin{equation*}
\label{conjug}
\sum_{\si\in\Sg_N}
(-1)^{\si}\,
\prod_{i=1}^N\,\,\Bigl(\,
x_i^{\,N-\si(i)}\Bigl(\ts 
u+\si(i)-1-\al\,x_i\ts\dd_i
-\sum_{l=1}^\infty\,\ts\frac{x_i\ts y_l}{1-x_i\ts y_l}\,
\Bigr)\Bigr)\,.
\end{equation*}
Here in any single summand
each of the factors corresponding to
$i=1\lcd N$ does not depend on the variables $x_j$ with $j\neq i\,$.
Therefore when applying the latter operator sum
to the identity function $1$ we can simply replace 
each $\dd_i$ with zero. Then we get the function
\begin{gather}
\notag
\sum_{\si\in\Sg_N}
(-1)^{\si}\,
\prod_{i=1}^N\,\,\Bigl(\,
x_i^{\,N-\si(i)}\Bigl(\ts 
u+\si(i)-1-\sum_{l=1}^\infty\,\ts\frac{x_i\ts y_l}{1-x_i\ts y_l}\,
\Bigr)\Bigr)\ =
\\[4pt]
\label{symsym}
\det\Big[\,x_i^{\,N-j}\Big(u+j-1+\,
\sum_{l=1}^\infty\,\frac{x_i\ts y_l}{x_i\ts y_l-1}\,\Big)\,\Big]
{\phantom{\big[}\!\!}_{i,j=1}^N\,.
\end{gather}

It follows that the function \eqref{pmsp}
is equal to the determinant \eqref{symsym}
divided by the Vandermonde polynomial $\De(x_1\lcd x_N)$
and by the Pochhammer symbol $(u)_N\ts$, see 
\eqref{dnu} and \eqref{snu}.
This ratio is equal to the right hand side of
\eqref{symfininf} by the proposition in Subsection \ref{mainid},
see the argument at the end of that subsection.
We will prove the proposition in Subsections \ref{expdet} to \ref{cancel}.
Thus we will complete the proof of our~theorem.
Note that another proof of this theorem can be obtained
by using the results of \cite[Sec.\,3]{AK} and \cite[Sec.\,9]{Shi}
on the Macdonald operators.


\section{Determinantal identities}
\label{section::Determinantal identities}

 
\subsection{Getting the theorem\/}
\label{mainid}

Let $\FF$ be any field. 
Consider the rational function of two variables $u,v$
\begin{equation}
\label{ka}
\Psi(u,v)=\frac{u\,v}{u\,v-1}
\end{equation}
with values in $\FF\,$. This function is a solution of the equation 
\begin{equation}
\label{kk}
(u-v)\,\Psi(u,w)\,\Psi(v,w)=u\,\Psi(v,w)-v\,\Psi(u,w)\,.
\end{equation}
Here $w$ is a third variable. One can easily demonstrate that 
any non-zero rational solution $\Psi(u,v)$ 
of \eqref{kk} has the form
$
{u}\ts/(u-\psi(v))
$
where $\psi(v)$ is an arbitrary rational function of a single variable.
In particular, by choosing $\psi(v)=1/v$ we get the solution 
\eqref{ka}.

Let $x_1\lcd x_N$ and  $y_1,y_2,\ldots$ be independent variables.
Here we assume that $N\ge1\,$. 
In the next three subsections we will prove 
the following proposition. 

\begin{pro}
For any solution\/ $\Psi(u,v)$ of the equation \eqref{kk} 
we have an \text{identity}
\begin{gather}
\nonumber
\det\Big[\,x_i^{\,N-j}\Big(u+j-1+\,\sum_{l=1}^\infty\,\Psi(x_i,y_l)\Big)\Big]
{\phantom{\big[}\!\!}_{i,j=1}^N=
\\[2pt]
\label{8}
\De(x_1\lcd x_N)
\ \sum_{k=0}^N\ 
(u+k)\ldots(u+N-1)
\ \sum_{\substack{i_1\lcd i_k\\j_1\lcd j_k}}
\ \prod_{r=1}^k\ \ \Psi(x_{i_r},y_{j_r})
\end{gather}
where all the indices\/ $i_1\lcd i_k\in\{1\lcd N\}$ are 
distinct, the indices\/ $j_1\lcd j_k\in\{1,2,\ldots\,\}$ are 
all distinct too,
and the sum is taken over all collections of these indices such that\/
{\rm different} are all the corresponding sets of\/ $k$ pairs 
\begin{equation}
\label{ia}
\big\{(i_1,j_1)\lcd(i_k,j_k)\big\}\,.
\end{equation}
\end{pro}

\medskip
For any $k\ge1$ take the symmetric group $\Sg_k\,$.
Using the permutations $\si\in\Sg_k$ the sum over the indices
$i_1\lcd i_k$ and $j_1\lcd j_k$ 
at the right hand side of the equality \eqref{8} can be also written as
\begin{equation}
\label{sis}
\sum_{\substack{i_1<\ldots<i_k\\j_1<\ldots<j_k}}
\ \sum_{\si\in\Sg_k}\ \ \prod_{r=1}^k\ \ \Psi(x_{i_r},y_{j_{\si(r)}})\,.
\end{equation}
Hence by choosing the function 
$\Psi(u,v)$ as in \eqref{ka} 
our proposition implies that the determinant 
\eqref{symsym} equals 
\begin{gather}
\notag
\De(x_1\lcd x_N)
\ \sum_{k=0}^N\ 
(u+k)\ldots(u+N-1)\ \times
\\[2pt]
\label{10}
\sum_{\ell(\la)=k}
\cc_\la\,m_\la(x_1\lcd x_N)\,m_\la(y_1,y_2,\ldots\,)\,.
\end{gather}

Indeed, if $\Psi(u,v)$ is the rational function \eqref{ka}
then the sum \eqref{sis} equals
\begin{gather*}
\sum_{\substack{i_1<\ldots<i_k\\j_1<\ldots<j_k}}
\ \sum_{\si\in\Sg_k}\ \ \prod_{r=1}^k\ \ 
\frac{x_{i_r}y_{j_{\si(r)}}}{x_{i_r}y_{j_{\si(r)}}-1}\ =
\\
\sum_{\substack{i_1<\ldots<i_k\\j_1<\ldots<j_k}}
\ \sum_{\si\in\Sg_k}\,
\sum_{l_1\lcd l_k=1}^\infty
\ \prod_{r=1}^k\ \ 
(x_{i_r}y_{j_{\si(r)}})^{\,l_r}\ =
\\
\sum_{\substack{i_1<\ldots<i_k\\j_1<\ldots<j_k}}
\ \sum_{\si,\tau\in\Sg_k}\,
\sum_{\ell(\la)=k}
\ \cc_\la^{\,-1}\ \,\prod_{r=1}^k\ \ 
(x_{i_r}y_{j_{\si(r)}})^{\,\la_{\tau(r)}}\,.
\end{gather*}

\noindent
Here $\la$ ranges over all partitions $\la$ of length $k\,$. 
The sum displayed in the last line equals 
the sum in the second line of \eqref{10}, by using the expression
\eqref{mon} for the
polynomial $m_\la(x_1\lcd x_N)$ and a similar expression for 
$m_\la(y_1,y_2,\ldots\,)$\,.~Thus our proposition implies 
the theorem as stated in Subsection \ref{mainres}.


\subsection{Expanding the determinant\/}
\label{expdet}

We will prove the proposition by induction on $N$.
The case $N=1$ is the induction base. Here
the left hand side of 
\eqref{8}~is
$$
u+\,\sum_{l=1}^\infty\,\Psi(x_1,y_l)\,.
$$
The right hand side of \eqref{8} is then the same by definition, 
since~$\De(x_1)=1\,$.

Now take $N>1$ and assume that the identity \eqref{8}
holds for $N-1$ instead of~$N$. For each index $i=1\lcd N$ 
we will for short denote
$$
\De_{\ts i}=
\De(x_1\lcd\xh_i\lcd x_N)
$$
where as usual the symbol $\xh_i$ indicates the omitted variable.
By expanding the determinant 
at the left hand side of \eqref{8} in the first column 
and then using the induction assumption with $u+1$ instead of $u\,$, 
we get the sum 
\begin{gather}
\nonumber
\sum_{i=1}^N\ 
(-1)^{i+1}x_i^{\,N-1}\,u\,\,
\De_{\ts i}
\ \times
\\[4pt]
\nonumber
\ \sum_{k=0}^{N-1}\ 
(u+k+1)\ldots(u+N-1)
\ \sum_{\substack{i_1\lcd i_k\neq i\\j_1\lcd j_k}}\ \prod_{r=1}^k
\ \ \Psi(x_{i_r},y_{j_r})\ +
\\[4pt]
\nonumber
\sum_{i=1}^N\ 
(-1)^{i+1}x_i^{\,N-1}\ \sum_{l=1}^\infty\ \Psi(x_i,y_l)\ 
\De_{\ts i}
\ \times
\\[4pt]
\label{one}
\ \sum_{k=0}^{N-1}\ 
(u+k+1)\ldots(u+N-1)
\ \sum_{\substack{i_1\lcd i_k\neq i\\j_1\lcd j_k}}
\ \prod_{r=1}^k\ \ \Psi(x_{i_r},y_{j_r})\,.
\end{gather}
Now consider the sum 
$$
\sum_{i=1}^N\ 
(-1)^{i+1}x_i^{\,N-1}\,
\De_{\ts i}\ \ 
\sum_{l=1}^\infty\,
\sum_{\substack{i_1\lcd i_k\neq i\\j_1\lcd j_k}}
\Psi(x_i,y_l)
\ \prod_{r=1}^k\ \Psi(x_{i_r},y_{j_r})
$$
coming from the last two lines of the display \eqref{one}.
This sum can be written~as
\begin{gather}
\nonumber
\sum_{i=1}^N\ 
(-1)^{i+1}x_i^{\,N-1}\,
\De_{\ts i}
\sum_{\substack{i_1\lcd i_k\neq i\\j_1\lcd j_k}}\ 
\sum_{\substack{l\neq j_1\lcd j_k}}
\Psi(x_i,y_l)
\ \prod_{r=1}^k\ \Psi(x_{i_r},y_{j_r})\ +
\\[4pt]
\label{linabo}
\sum_{i=1}^N\ 
(-1)^{i+1}x_i^{\,N-1}\,
\De_{\ts i}
\sum_{\substack{i_1\lcd i_k\neq i\\j_1\lcd j_k}}
\,\sum_{s=1}^k\ 
\Psi(x_i,y_{j_s})
\ \prod_{r=1}^k\ \Psi(x_{i_r},y_{j_r})\,.
\end{gather}
The next two subsections
will show that the sum in the second line of~\eqref{linabo}~equals
\begin{equation}
\label{20}
k\ \ 
\sum_{i=1}^N\ 
(-1)^{i+1}x_i^{\,N-1}\,
\De_{\ts i}\ 
\sum_{\substack{i_1\lcd i_k\neq i\\j_1\lcd j_k}}\ \prod_{r=1}^k
\ \  \Psi(x_{i_r},y_{j_r})\,.
\end{equation}

Due to that equality the sum \eqref{one} can be rewritten as
\begin{gather}
\nonumber
\sum_{i=1}^N
(-1)^{i+1}x_i^{\,N-1}\,
\De_{\ts i}
\ \times
\\[10pt]
\nonumber
\Bigl(\,
\ \sum_{k=0}^{N-1}\ 
(u+k)\,(u+k+1)\ldots(u+N-1)
\ \sum_{\substack{i_1\lcd i_k\neq i\\j_1\lcd j_k}}\ \prod_{r=1}^k\ 
\Psi(x_{i_r},y_{j_r})\ +
\\[2pt]
\label{M-1k+1}
\ \sum_{k=0}^{N-1}\ 
(u+k+1)\ldots(u+N-1)
\sum_{\substack{i_1\lcd i_k\neq i\\j_1\lcd j_k\neq l}}
\Psi(x_i,y_l)
\ \prod_{r=1}^k\ \Psi(x_{i_r},y_{j_r})\Bigr)\,.
\end{gather}
Here the indices $i_1 , j_1\lcd i_k , j_k$ and $l$ are assumed to be running.
In the second line of \eqref{M-1k+1} 
we can include the index 
$k=N$ to the summation range
without affecting the sum since $k$ distinct indices 
$i_1\lcd i_k\neq i$ exist only if  $k<N\,$.
In the third line 
we can replace $k+1$ with $k$ where $k=1\lcd N$.~We~get 
$$
\ \sum_{k=1}^N\ 
(u+k)\ldots(u+N-1)
\sum_{\substack{i_1\lcd i_{k-1},i\\j_1\lcd j_{k-1},l}}
\Psi(x_i,y_l)
\ \prod_{r=1}^{k-1}\ \Psi(x_{i_r},y_{j_r})\,.
$$
Here the set 
$$
\big\{((i_1,j_1)\lcd(i_{k-1},j_{k-1}),(i,l)\big\}\!\!\!\!
$$
can be any of the sets \eqref{ia} appearing in \eqref{8},
provided that in \eqref{ia} one of the indices $i_1\lcd i_k$ coincides with
the given index $i\,$. That one index can be then denoted by $i_k$ 
because the order of
the $k$ elements of the set \eqref{ia} does~not~matter.

These observations will show that 
the sum displayed in the second and the third
lines of of \eqref{M-1k+1} equals
$$
\ \sum_{k=0}^N\ 
(u+k)\ldots(u+N-1)\ 
\sum_{\substack{i_1\lcd i_k\\j_1\lcd j_k}}\ \prod_{r=1}^k
\ \ \Psi(x_{i_r},y_{j_r})\,.
$$
In particular, they will show that this sum
does not depend on the index $i\,$. 
Hence we will have the identity \eqref{8} proved,
by expanding the determinant $\De(x_1\lcd x_N)$ 
in the first column.


\subsection{Two sums\/}
\label{twosums}

We need to prove for $k=0\lcd N-1$ that the sum 
\eqref{20} equals the sum in the second line of \eqref{linabo}.  
By using \eqref{kk} the last mentioned sum can be written~as 
\begin{gather*}
\sum_{i=1}^N\ 
\sum_{\substack{i_1\lcd i_k\neq i\\j_1\lcd j_k}}
\,\sum_{s=1}^k\ 
(-1)^{i+1}x_i^{\,N-1}\,
\De_{\ts i}\ \times
\\[4pt]
\nonumber
\frac{x_i\,\Psi(x_{i_s},y_{j_s})-x_{i_s}\,\Psi(x_i,y_{j_s})}{x_i-x_{i_s}}
\ \,\prod_{r\neq s}\ \Psi(x_{i_r},y_{j_r})
\end{gather*}
which by subtracting the product $x_{i_s}\,\Psi(x_{i_s},y_{j_s})$
from the numerator of the above displayed fraction and then adding it back
can be rewritten as
\begin{gather}
\nonumber
\sum_{i=1}^N\ 
\sum_{\substack{i_1\lcd i_k\neq i\\j_1\lcd j_k}}
\,\sum_{s=1}^k\ 
(-1)^{i+1}x_i^{\,N-1}\,
\De_{\ts i}
\ \,\prod_{r=1}^k\ \Psi(x_{i_r},y_{j_r})\ +
\\[2pt]
\nonumber
\sum_{i=1}^N\ 
\sum_{\substack{i_1\lcd i_k\neq i\\j_1\lcd j_k}}
\,\sum_{s=1}^k\ 
(-1)^{i+1}x_i^{\,N-1}\,
\De_{\ts i}\ \times
\\[4pt]
\label{33}
\biggl(\,
\frac{x_{i_s}\,\Psi(x_{i_s},y_{j_s})}{x_i-x_{i_s}}
\,+\,
\frac{x_{i_s}\,\Psi(x_i,y_{j_s})}{x_{i_s}-x_i}
\,\biggr)
\ \,\prod_{r\neq s}\ \Psi(x_{i_r},y_{j_r})\,.
\end{gather}

The summands in the first line of the display
\eqref{33} do not depend on $s\,$.
Hence their sum equals \eqref{20}.  
Let us show that the sum appearing in the second and the third
lines of \eqref{33} equals zero. 
By opening the brackets in the third line
and then swapping the running  indices $i,i_s$ in each term 
coming from the second fraction in the brackets,
the sum in the second and the third lines becomes
\begin{gather*}
\sum_{i=1}^N\ 
\sum_{\substack{i_1\lcd i_k\neq i\\j_1\lcd j_k}}
\,\sum_{s=1}^k\ 
\Bigl(\ \,\prod_{r=1}^k\ \Psi(x_{i_r},y_{j_r})\,\Bigr)\,\times
\\[4pt]
\frac{
(-1)^{i+1}\,x_i^{\,N-1}\,x_{i_s}\,\De_{\ts i}+
(-1)^{i_s+1}\,x_{i_s}^{\,N-1}\,x_i\,\De_{\ts i_s}}
{x_i-x_{i_s}}
\end{gather*}
\\
Here in the first line the product over $r=1\lcd k$ depends 
neither on the index $s$ nor
on the choice of the index $i\neq i_1\lcd i_k\,$. 
The fraction in the second line
does not depend on the indices $j_1\lcd j_k\,$. 
We will show that for any {\it fixed\/} 
distinct indices $i_1\lcd i_k\in\{1\lcd N\}$
the sum of these fractions over $s=1\lcd k$ and 
$i\neq i_1\lcd i_k$ is equal to zero.
This will complete our proof of the identity \eqref{8}.

Let $\si$ range over  
$\Sg_N\,$. 
By the definition of the Vandermonde polynomial, the sum
of the last displayed fractions over $s=1\lcd k$ 
and $i\neq i_1\lcd i_k$ equals
$$
\sum_{i\neq i_1\lcd i_k}\,\sum_{s=1}^k\ \ \sum_{\si(i)=1}
\,(-1)^\si\ 
\frac{x_i^{\,N-1}\,x_{i_s}^{\,N-\si(i_s)+1}-x_{i_s}^{\,N-1}\,x_i^{\,N-\si(i_s)+1}}
{x_i-x_{i_s}}\,\prod_{j\,\neq\,i,i_s} x_j^{\,N-\si(j)}
$$

\medskip\noindent
which is in turn equal to the sum
\begin{equation}
\label{66}
\sum_{i\neq i_1\lcd i_k}\,\sum_{s=1}^k\ \ \sum_{\si(i)=1}
\,(-1)^\si
\sum_{r=2}^{\si(i_s)-1}
x_i^{\,N-r}\,x_{i_s}^{\,N-\si(i_s)+r-1}
\,\prod_{j\,\neq\,i,i_s} x_j^{\,N-\si(j)}\,.
\end{equation}

\medskip
Recall that here $N>1\,$. One can easily prove by induction on 
$N=2,3,\ldots$ that the total number of terms in the sum \eqref{66} equals
$$
(N-1)!\,(N-2)\,(N-k)\,k\,/\,2\,.
$$
However, we shall not use this equality and will leave its proof to the reader. 
In the next subsection, we will prove that all terms in \eqref{66} 
cancel each other.


\subsection{Cancellations\/}
\label{cancel}

The sum \eqref{66} is taken over quadruples $(i,s,\si,r)\,$.
Take any of them such~that
$$
\si^{-1}(r)\,\neq\,i_1\lcd i_k\,.
$$
Such a quadruple will be called {\it of type I\/}.
Denote $\si^{-1}(r)=\ib\,$. 
Observe that $\ib\neq i$ because $\si(\ib)=r\ge2$ while $\si(i)=1\,$.
Put
\begin{equation}
\label{sib}
\sib=\si\,\tau_{\,i\ib}
\end{equation}
where $\tau_{\,i\ib\,}\in\Sg_N$ is the transposition of $i$ and $\ib\,$.
Then $\sib(i)=r$ and $\sib(\ib)=1\,$.
We also have $\sib(i_s)=\si(i_s)$ because $i_s\neq i,\ib\,$.
Hence the quadruple $(\ib,s,\sib,r)$ appears in \eqref{66}
together with the quadruple $(i,s,\si,r)\,$.
But the summands in~\eqref{66} 
corresponding to these two quadruples cancel each other. Indeed, 

\begin{align*}
-\,(-1)^{\si}\,
x_i^{\,N-r}\,
x_{\ib}^{\,N-\si(\ib)}\,
x_{i_s}^{\,N-\si(i_s)+r-1}\,
&\prod_{j\,\neq\,i,\ib,i_s} x_j^{\,N-\si(j)}\ =
\\
(-1)^{\sib}\,
x_i^{\,N-\sib(i)}\,
x_{\ib}^{\,N-r}\,
x_{i_s}^{\,N-\sib(i_s)+r-1}\,
&\prod_{j\,\neq\,i,\ib,i_s} x_j^{\,N-\sib(j)}\,.
\end{align*}
Note that here
$$
\sib^{-1}(r)=i\neq i_1\lcd i_k
$$ 
hence $(\ib,s,\sib,r)$ is also of type~I\,.
Moreover, by applying our construction to the latter quadruple 
instead of $(i,s,\si,r)$
we get~the initial quadruple $(i,s,\si,r)$ back.

Next take a quadruple $(i,s,\si,r)$ showing in \eqref{66} 
such~that for some index~$\sb$
$$
\si(i_s)-r+1=\si(i_{\sb})\,.
$$
Such a quadruple will be called {\it of type II\/}. Note that 
$s\neq\sb$ because $r\neq1\,$. Put
$$
\sib=\si\,\tau_{\,i_s i_{\sb}}
\,.
$$
Here $\sib(i)=\si(i)=1$ because $i\neq i_s\,,i_{\sb}\,$.
We also have $\sib(i_{\sb})=\si(i_s)\,.$
Hence the quadruple $(i,\sb,\sib,r)$ appears in \eqref{66}
together with $(i,s,\si,r)\,$.
The summands in~\eqref{66} corresponding to 
these two quadruples cancel each other. Indeed, 

\begin{align*}
-\,(-1)^{\si}\,
x_i^{\,N-r}\,
x_{i_s}^{\,N-\si(i_s)+r-1}\,
x_{i_{\sb}}^{\,N-\si(i_{\sb})}\,
&\prod_{j\,\neq\,i,i_s,i_{\sb}} x_j^{\,N-\si(j)}\ =
\\
(-1)^{\sib}\,
x_i^{\,N-r}\,
x_{i_s}^{\,N-\sib(i_s)}\,
x_{i_{\sb}}^{\,N-\sib(i_{\sb})+r-1}\,
&\prod_{j\,\neq\,i,i_s,i_{\sb}} x_j^{\,N-\sib(j)}\,.
\end{align*}
Here 
$$
\sib(i_{\sb})-r+1=\si(i_s)-r+1=\sib(i_s)
$$
and $(i,\sb,\sib,r)$ is also of type~II\,.
Moreover, by applying our construction to the latter quadruple  
instead of $(i,s,\si,r)$ we get~the initial quadruple $(i,s,\si,r)$ back.

Note that the above two constructions differ 
for the quadruples of type I~and~II\,,
while a quadruple can be of both types simultaneously.
However, the summands in \eqref{66} corresponding
to quadruples of any of the two types still cancel each other.
Indeed, take any quadruple $(i,s,\si,r)$ of type I which also has
type II\,. By applying our first constuction to it we get 
another quadruple $(\ib,s,\sib,r)$ of type~I\,, where $\sib$ is defined by
\eqref{sib} while $\ib=\si^{-1}(r)\,$. But then for a certain index $\sb$ 
we have
$$
\sib(i_s)-r+1=\si(i_s)-r+1=\si(i_{\sb})
$$
because $(i,s,\si,r)$ also has type II\,.
Hence the quadruple $(\ib,s,\sib,r)$ is of type II too. 
We could similarly check that the result of applying
our second construction to $(i,s,\si,r)$ is not only of type II\, 
but of type I as well.
However, this is already not needed for the cancellation. 
So we will leave checking it to the reader.

Finally, take any quadruple $(i,s,\si,r)$ appearing in \eqref{66} which is 
neither of type I nor of type II\,.
Such a quadruple will be called {\it of type III\/}. Here $r=\si(i_{\sb})$ 
for some index $\sb$ 
because $(i,s,\si,r)$ is not of type I\,. For some index 
$\ib\neq i_1\lcd i_k$ we~also~have
\begin{equation}
\label{ib}
\si(i_s)-r+1=\si(\ib)
\end{equation}
because  $(i,s,\si,r)$ is not of type II\,. Observe that here
the four indices $\,i,i_{\sb},i_s,\ib\,$ are pairwise distinct. Indeed, 
here we have 
$i,\ib\neq i_s,i_{\sb}$ by definition. Further, here $i\neq\ib$ because 
$\si(i)=1$ while
$\si(\ib)\ge2$ by the definition \eqref{ib}. Furthermore, 
here $i_s\neq i_{\sb}$
because $r<\si(i_s)$ while $r=\si(i_{\sb})\,$.
Put $\sib=\si\,\tau$ where 
$\tau\in\Sg_N$ cyclically permutes the indices $\,i,i_{\sb},i_s,\ib\,$
and leaves all the remaining indices fixed. More~exactly,
$$
\tau:\,i\mapsto i_{\sb}\mapsto i_s\mapsto\ib\mapsto i\,.
$$

Let $\rb$ be the number at either side of equality \eqref{ib}.
Consider the quadruple $(\ib,\sb,\sib,\rb)\,$. Here $\sib(\ib)=\si(i)=1$ 
by definition.
Due to the range of $r$ we also have 
$$
2\le\rb\le\si(i_s)-1=\sib(i_{\sb})-1\,.
$$
Therefore the quadruple $(\ib,\sb,\sib,\rb)$ appears in \eqref{66}
together with $(i,s,\si,r)\,$.
The summands in~\eqref{66} corresponding to the two quadruples 
cancel each other. Indeed, we have the equality

\begin{align*}
-\,(-1)^{\si}\,
x_i^{\,N-r}\,
x_{i_{\sb}}^{\,N-\si(i_{\sb})}\,
x_{i_s}^{\,N-\si(i_s)+r-1}\,
x_{\ib}^{\,N-\si(\ib)}\,
&\prod_{j\,\neq\,i,i_{\sb},i_s,\ib}
x_j^{\,N-\si(j)}\ =
\\
(-1)^{\sib}\,
x_i^{\,N-\sib(i)}\,
x_{i_{\sb}}^{\,N-\sib(i_{\sb})+\rb-1}\,
x_{i_s}^{\,N-\sib(i_s)}\,
x_{\ib}^{\,N-\rb}\,
&\prod_{j\,\neq\,i,i_{\sb},i_s,\ib}
x_j^{\,N-\sib(j)}\,.
\end{align*}
Here $\rb=\sib(i_s)$
so that $(\ib,\sb,\sib,\rb)$ is not of type I\,.
We also have
$$
\sib(i_{\sb})-\rb+1=\sib(i)
$$
so that $(\ib,\sb,\sib,\rb)$ is not of type II\,. 
So this quadruple is of type III.
Moreover, by applying our third construction to this quadruple  
instead of $(i,s,\si,r)$ we get~the initial quadruple $(i,s,\si,r)$ back.
Thus all summands in \eqref{66} cancel each other.~We
have now completed the induction step in the proof of our proposition.


\section*{\normalsize\bf Acknowledgements}

This work originates from our discussions with S.\,M.\,Khoroshkin.
It was he who suggested to us to compute limits 
of commuting operators whose eigenvectors are
the Jack symmetric polynomials.
We are also grateful to A.\,K.\,Pogrebkov
for helpful comments on his work \cite{P},
and to E.\,Vasserot for drawing our attention to his
joint work with O.\,Schiffmann \cite{SV}.
The first and second named of us have been supported
by the EPSRC grants EP\ns/I\ts014071 and EP\ns/H000054~respectively.



\end{document}